\definecolor{blue}{rgb}{1.0,0.0,0.9}
\newtheorem{thrm}{Theorem}[section]
\newtheorem{cor}[thrm]{Corollary}
\newtheorem{lem}[thrm]{Lemma}
\theoremstyle{definition}
\newtheorem{defn}[thrm]{Definition}
\newtheorem{exm}[thrm]{Example}
\newtheorem{probl}[thrm]{Problem}
\crefname{thrm}{Theorem}{Theorems}
\crefname{lem}{Lemma}{Lemmas}
\crefname{cor}{Corollary}{Corollaries}
\crefname{prop}{Proposition}{Propositions}
\crefname{defn}{Definition}{Definitions}
\crefname{exm}{Example}{Examples}
\crefname{rem}{Remark}{Remarks}
\crefname{probl}{Problem}{Problems}
\crefname{section}{Section}{Sections}
\crefname{equation}{\unskip}{\unskip}
\crefname{enumi}{\unskip}{\unskip}
\newcommand{\af}{\alpha}
\newcommand{\bt}{\beta}
\begin{document}

\title[Lie-type Derivations of Finitary Incidence Algebras]
{Lie-type Derivations of Finitary Incidence Algebras}

\author{Mykola Khrypchenko}
\address{Departamento de Matem\'atica, Universidade Federal de Santa Catarina,  Campus Reitor Jo\~ao David Ferreira Lima, Florian\'opolis, SC, CEP: 88040--900, Brazil}
\email{nskhripchenko@gmail.com}

\author{Feng Wei}
\address{School of Mathematics and Statistics, Beijing Institute of Technology, Beijing 100081, P. R. China}
\email{daoshuo@hotmail.com, daoshuowei@gmail.com}

\begin{abstract}
Let $P$ be an arbitrary partially ordered set, $R$ a commutative ring with identity and $FI(P,R)$ the finitary incidence 
algebra of $P$ over $R$. Under some natural assumption on $R$, we prove that each Lie-type derivation of $FI(P,R)$ is 
proper, which partially generalizes the main results of \cite{ZhangKhrypchenko,WangXiao}.
\end{abstract}

\subjclass[2010]{Primary 16W25; Secondary 16W10, 47L35}

\keywords{Lie-type derivation, derivation, finitary incidence algebra}

\date{\today}

\maketitle

\section{Introduction and Preliminaries}\label{intro-prelim}

Let $A$ be an associative algebra over a commutative ring $R$. 
We define the {\it Lie product} $[x,y]:=xy-yx$ and {\it Jordan product} $x\circ y:=xy+yx$ for all $x,y\in A$. 
Then $(A,[\ ,\ ])$  becomes a Lie algebra and $(A,\circ)$ is a Jordan algebra. It is a fascinating topic to study the 
connection between the associative, Lie and Jordan structures on $A$. In this field, two classes of 
mappings are of crucial importance. One of them consists of mappings, preserving a type of product, 
for example, Jordan homomorphisms and Lie homomorphisms. The other one is formed by differential operators, 
satisfying a type of Leibniz formulas, such as Jordan derivations and Lie derivations.
In the AMS Hour Talk of 1961, Herstein proposed many problems concerning the structure of 
Jordan and Lie mappings in associative simple and prime rings~\cite{Her}. Roughly speaking, 
he conjectured that these mappings are all of the proper or standard form. The renowned 
Herstein's Lie-type mapping research program was formulated since then. Martindale 
gave a major force in this program under the assumption that the rings contain some 
nontrivial idempotents \cite{Mar64}. The first idempotent-free result 
on Lie-type mappings was obtained by Bre\v sar in \cite{Bre93}. We refer the reader to 
Bre\v sar's survey paper \cite{Bre04} for a detailed historical background.

We recall that an $R$-linear mapping $d: A\to A$ is called a {\it derivation} if $d(xy)=d(x)y+xd(y)$ for all $x,y\in A$, 
and it is called a {\it Lie derivation} if
$$
d([x,y])=[d(x),y]+[x,d(y)]
$$
for all $x,y\in A$.  A \textit{Lie triple derivation} is an $R$-linear mapping $d:
A\to A$ which satisfies the rule
$$
d([[x, y], z])=[[d(x), y], z]+[[x, d(y)], z]+[[x,
y], d(z)]
$$
for all $x, y, z\in A$. Obviously, a derivation is a Lie derivation, and similarly a Lie derivation is a Lie triple
derivation. But, the converse statements are not true in general. For instance, suppose that $d: A\to
A$ is a derivation and that $\tau: A\to Z(A)$ is a linear mapping from $A$ into its center $Z(A)$ such that $\tau([x,
y])=0$ for all $x, y\in A$. Then $d+\tau$ is a Lie derivation of $A$, but it is not necessarily a derivation
of $A$. Likewise, if $d: A\to A$ is a derivation and $\tau: A\to Z(A)$ is a linear mapping from
$A$ into its center $Z(A)$ such that $\tau([[x, y], z])=0$ for all $x, y, z\in A$, then $d+\tau$ is a Lie
triple derivation of $A$, but it is not necessarily a Lie derivation of $A$. In fact, if $d$ is a derivation of $A$ 
and $\tau$ is an $R$-linear mapping from $A$ into its center $Z(A)$, then $d+\tau$ is a Lie derivation (resp. Lie triple derivation) 
if and only if $\tau$ annihilates all commutators $[x,y]$ (resp. second commutators $[[x, y], z]$). 
A Lie derivation (resp. Lie triple derivation) of the form $d+\tau$, with $d$ being a
 derivation and $\tau$ a central-valued linear mapping annihilating each commutator (resp. second commutator), 
 will be said to be {\it proper}.

Taking into account the definitions of
Lie derivations and Lie triple derivations, one naturally expect to extend them in
one more general way. Suppose that $n\geq 2$ is a fixed positive
integer. Let us introduce a family of polynomials on $A$
$$
\begin{aligned}
p_1(x_1)&=x_1\\
p_2(x_1,x_2)&=[x_1,x_2]\\
p_3(x_1,x_2,x_3)&=[p_2(x_1,x_2),x_3]=[[x_1,x_2],x_3]\\
\vdots \ \ \ \ \ \ & \ \ \ \ \ \ \ \ \ \ \ \ \ \vdots \\
p_n(x_1,x_2,\cdots,x_n)&=[p_{n-1}(x_1,x_2,\cdots,x_{n-1}),x_n].
\end{aligned}
$$
The polynomial $p_n(x_1,x_2,\cdots,x_n)$ is said to be an
$(n-1)$-\textit{th commutator} ($n\geq 2$). An $R$-linear mapping $L:
A\to A$ is called a
\textit{Lie $n$-derivation} if
\begin{align}\label{Lie-n-der-formula}
	L(p_n(x_1,x_2,\dots,x_n))=\sum_{k=1}^n
	p_n(x_1,\dots,x_{k-1}, L(x_k),x_{k+1},\dots,x_n)
\end{align}
holds for all $x_1,x_2,\dots,x_n\in A$. Lie $n$-derivations were introduced by Abdullaev
\cite{Abdullaev}. In particular, he showed that every Lie $n$-derivation $L$ on a von Neumann algebra $M$ 
without central summands of type $I_1$ can be decomposed as $L=D+E$, where $D$ is an ordinary 
derivation on $M$ and $E$ is a center-valued mapping
annihilating all $n$-th commutators. This result extends an older assertion (for $n=2,3$) due to Miers \cite{Miers1, Miers2}.
By definition, a Lie derivation is a Lie $2$-derivation and a Lie triple derivation is a Lie
$3$-derivation. It is straightforward to check that every Lie $n$-derivation
on $A$ is a Lie $(n+k(n-1))$-derivation for all $k\in \Bbb{N}_0$.
Lie $2$-derivations, Lie $3$-derivations and Lie $n$-derivations are
collectively referred to as \textit{Lie-type derivations}. We shall say that a 
Lie $n$-derivation of $A$ is \textit{proper}, if $L=d+\tau$, 
where $d$ is a derivation of $A$ and $\tau$ is a central-valued linear mapping  
annihilating each \textit{$(n-1)$-th commutator} $p_n(x_1,\dots,x_n)$, where $x_1,\dots,x_n\in A$.
The question of whether each Lie-type derivation on a given algebra has the proper form  is extensively 
studied, see \cite{Benkovic1, Benkovic2, Benkovic3, BenkovicEremita1, BenkovicEremita2, Cheung, 
DuWang, FosnerWeiXiao, Lu1, Lu2, LuLiu, Mar64, MathieuVillena, Miers1, Miers2, Qi1, Qi2, Wang, WangWang, XiaoWei}. 
These works totally fulfill the Herstein's program in the background of triangular algebras and operator algebras.  
However, Cheung's work \cite{Cheung} is of special significance
in our present case. He viewed the nest algebra over a Hilbert space as a triangular algebra and 
hence circumvented  the analysis technique. This method makes it possible to study Lie 
derivations on incidence algebras in a combinatorial and linear manner \cite{ZhangKhrypchenko,WangXiao}.

Let $(P,\le)$ be a partially ordered set (poset) and $R$ a commutative ring with identity. 
With any pair of $x\le y$ from $P$ associate a symbol $e_{xy}$ and denote by 
$I(P,R)$ the $R$-module of formal sums
\begin{align}\label{formal-sum}
\alpha=\sum_{x\le y}\alpha(x,y)e_{xy},
\end{align}
where $\alpha(x,y)\in R$. If $x$ and $y$ run through a subset $X$ of the ordered 
pairs $x\le y$ in the sum \cref{formal-sum}, then it is meant that $\alpha(x,y)=0$ for any 
pair $x\le y$ which does not belong to $X$.

The sum \cref{formal-sum} is called a {\it finitary series} \cite{KhripchenkoNovikov}, 
whenever for any pair of $x,y\in P$ with $x<y$ there exists only a finite number of $u,v\in P$, 
such that $x\le u<v\le y$ and $\alpha(u,v)\ne 0$. The set of finitary series, denoted by $FI(P,R)$, 
is an $R$-submodule of $I(P,R)$ which is closed under the convolution of the series:
\begin{align}\label{conv-series}
\alpha\beta=\sum_{x\le y}\left(\sum_{x\le z\le y}\alpha(x,z)\beta(z,y)\right)e_{xy}
\end{align}
for $\alpha,\beta\in FI(P,R)$. Thus, $FI(P,R)$ is an $R$-algebra, called the {\it finitary incidence 
algebra of $P$ over $R$}. Moreover, $I(P,R)$ is a bimodule over $FI(P,R)$ by means of of \cref{conv-series}.

The incidence algebra of a poset was first considered by 
Ward in \cite{Ward} as a generalized algebra of arithmetic functions. Rota and Stanley 
developed incidence algebras as fundamental structures of enumerative combinatorial
theory and the allied areas of arithmetic function theory (see~\cite{Stanley}). Furthermore, 
Stanley~\cite{St} initiated the study of algebraic mappings and combinatorial structure of an 
incidence algebra. Since then, the automorphisms and other algebraic mappings of incidence 
algebras have been increasingly significant (see \cite{Baclawski, BruFS, BruL, Kh-aut, Kh-der, Kh-Jor, Kh-loc, Kopp,Sp, Xiao, ZhangKhrypchenko} 
and the references therein). On the other hand,
in the theory of operator algebras, the incidence algebra of a finite poset is 
referred as a bigraph algebra or a finite dimensional CSL algebra. 

The main goal of this paper is to describe Lie-type derivations of the finitary incidence algebra $FI(P,R)$.

\section{Lie $n$-derivations of $FI(P,R)$}
\label{sec-lieder-FI}

We shall identify $e_{xy}$ with $1_Re_{xy}$. We shall also write $e_x$ for $e_{xx}$.
Observe that $e_{xy}e_{uv}=\delta_{yu}e_{xv}$ by the definition of convolution. We shall also frequently use the formula
\begin{align}\label{e_x.af.e_y}
 e_x\af e_y=\af(x,y)e_{xy}
\end{align}
for all $\af\in FI(P,R)$ and $x\le y$.

\begin{lem}\label{p_n(e_x.af.e_y.etc)}
	Let $\af\in FI(P,R)$. Then for all $x<y$
	\begin{align}
		p_n(e_x,\af,e_y,\dots,e_y)&=\af(x,y)e_{xy},\ n\ge 3,\label{p_n(e_x.af.e_y...)(xy)=af(xy)}\\
		p_n(\af,e_y,\dots,e_y)(x,y)&=\af(x,y),\ n\ge 2,\label{p_n(af.e_y...)(xy)=af(xy)}\\
		p_n(\af,e_{xy},e_y,\dots,e_y)&=(\af-\af(y,y))e_{xy},\ n\ge 3.\label{p_n(af.e_xy.e_y...e_y)=(af-af(yy))e_xy}
	\end{align}
\end{lem}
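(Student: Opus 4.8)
The plan is to prove all three identities by induction on $n$, reducing each inductive step to a single additional bracket with $e_y$. First I would record the elementary fact that drives every step: if $x<y$ and $\gamma\in I(P,R)$ is supported in the ``column'' ending at $y$ but off the diagonal, i.e. $\gamma=\sum_{u<y}\gamma(u,y)e_{uy}$, then $\gamma e_y=\gamma$ and $e_y\gamma=0$, whence $[\gamma,e_y]=\gamma$. Both equalities follow at once from $e_{uy}e_y=e_{uy}$ and $e_ye_{uy}=\delta_{yu}e_y=0$, the latter because $u<y$ forces $u\ne y$. Thus any such $\gamma$ is fixed by the operator $[\,\cdot\,,e_y]$.

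Next I would dispose of the base cases by direct computation from $e_{xy}e_{uv}=\delta_{yu}e_{xv}$ and \eqref{e_x.af.e_y}. For \eqref{p_n(e_x.af.e_y...)(xy)=af(xy)} with $n=3$, expanding $p_3(e_x,\af,e_y)=[[e_x,\af],e_y]$ into $e_x\af e_y-e_ye_x\af-\af e_xe_y+e_y\af e_x$; here $e_x\af e_y=\af(x,y)e_{xy}$ by \eqref{e_x.af.e_y}, while $e_ye_x=0$, $e_xe_y=0$ and $e_y\af e_x=0$ because $x<y$ rules out both $y=x$ and $y\le x$, leaving $\af(x,y)e_{xy}$. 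For \eqref{p_n(af.e_y...)(xy)=af(xy)} with $n=2$, I would read off the $(x,y)$-entry of $[\af,e_y]=\af e_y-e_y\af$ from $\af e_y=\sum_{u\le y}\af(u,y)e_{uy}$ and $e_y\af=\sum_{y\le v}\af(y,v)e_{yv}$: the first contributes $\af(x,y)$ at position $(x,y)$ and the second contributes $0$ there since $x\ne y$. For \eqref{p_n(af.e_xy.e_y...e_y)=(af-af(yy))e_xy} with $n=3$, I would first compute $[\af,e_{xy}]=\af e_{xy}-e_{xy}\af$ with $\af e_{xy}=\sum_{u\le x}\af(u,x)e_{uy}$ and $e_{xy}\af=\sum_{y\le v}\af(y,v)e_{xv}$, and then bracket with $e_y$; the delicate point is that $(e_{xy}\af)e_y=\af(y,y)e_{xy}$ and $e_y(e_{xy}\af)=0$, which is exactly where the correction scalar $\af(y,y)$ is born, giving $(\af-\af(y,y))e_{xy}$.

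Finally I would run the induction. For \eqref{p_n(e_x.af.e_y...)(xy)=af(xy)} and \eqref{p_n(af.e_xy.e_y...e_y)=(af-af(yy))e_xy}, the inductive hypothesis says $p_{n-1}$ equals $\af(x,y)e_{xy}$, respectively $(\af-\af(y,y))e_{xy}=\sum_{u\le x}\af(u,x)e_{uy}-\af(y,y)e_{xy}$; in both cases the right-hand side is a column-$y$ element supported off the diagonal entry $(y,y)$, so by the fact above it is fixed by $[\,\cdot\,,e_y]$, and $p_n=[p_{n-1},e_y]=p_{n-1}$ reproduces the claimed value. For \eqref{p_n(af.e_y...)(xy)=af(xy)}, where only the $(x,y)$-entry is asserted, it suffices to note that $[\gamma,e_y]$ and $\gamma$ share the same $(x,y)$-entry for every $\gamma$, since $(\gamma e_y)(x,y)=\gamma(x,y)$ and $(e_y\gamma)(x,y)=0$ when $x<y$; this transports the entry $\af(x,y)$ from $p_{n-1}$ to $p_n$ unchanged. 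I expect the only genuine obstacle to be the bookkeeping in the base case of \eqref{p_n(af.e_xy.e_y...e_y)=(af-af(yy))e_xy}: one must expand the double commutator fully, track which of the surviving convolutions produce the scalar $\af(y,y)$ under the constraint $x<y$, and check that the terms assemble into $(\af-\af(y,y))e_{xy}$ rather than $\af e_{xy}$.
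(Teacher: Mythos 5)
Your proposal is correct and follows essentially the same route as the paper: direct computation of the base cases $n=3$ (resp.\ $n=2$) from $e_{xy}e_{uv}=\delta_{yu}e_{xv}$, followed by induction on $n$ using the observation that bracketing with $e_y$ fixes elements supported on the off-diagonal pairs $(u,y)$, $u<y$ (the paper states this in the equivalent forms $[e_x\bt,e_y]=e_x\bt e_y$ and $(\af-\af(y,y))e_{xy}e_y=(\af-\af(y,y))e_{xy}$, $e_y(\af-\af(y,y))e_{xy}=0$). All the individual verifications you flag, including the appearance of the scalar $\af(y,y)$ from $(e_{xy}\af)e_y=\af(y,y)e_{xy}$, check out.
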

\begin{proof}
	For \cref{p_n(e_x.af.e_y...)(xy)=af(xy)}, we should remark that
	\begin{align}\label{[e_x.bt_e_y]=e_x.bt.e_y}
		[e_x\bt,e_y]=e_x\bt e_y
	\end{align}
	for any $\bt\in FI(P,R)$, which is due to the fact that $e_ye_x=0$. Since $e_xe_y=e_y\bt e_x=0$, we further 
	get $[\bt e_x,e_y]=0$. And hence  
	\begin{align*}
		p_3(e_x,\af,e_y)=[[e_x,\af],e_y]=[e_x\af,e_y]-[\af e_x,e_y]=e_x\af e_y.
	\end{align*}
	It follows from \cref{[e_x.bt_e_y]=e_x.bt.e_y} that 
	\begin{align*}
		p_4(e_x,\af,e_y,e_y)=[p_3(e_x,\af,e_y),e_y]=[e_x\af e_y,e_y]=e_x\af e_y\cdot e_y=e_x\af e_y.
	\end{align*}
	By a trivial induction argument, the left-hand side of \cref{p_n(e_x.af.e_y...)(xy)=af(xy)} coincides with $e_x\af e_y$. It remains to apply \cref{e_x.af.e_y}.
	
	Now, to prove \cref{p_n(af.e_y...)(xy)=af(xy)}, let us write
	\begin{align*}
		p_2(\af,e_y)(x,y)=[\af,e_y](x,y)=(\af e_y-e_y\af)(x,y)=\af(x,y),
	\end{align*}
	whence \cref{p_n(af.e_y...)(xy)=af(xy)} by induction.
	
	Finally, let us calculate \cref{p_n(af.e_xy.e_y...e_y)=(af-af(yy))e_xy} for $n=3$:
	\begin{align*}
		p_3(\af,e_{xy},e_y)&=(\af e_{xy}-e_{xy}\af)e_y-e_y(\af e_{xy}-e_{xy}\af)\\
		&=\af e_{xy}-\af(y,y)e_{xy}\\
		&=(\af-\af(y,y))e_{xy}.
	\end{align*}
	Now since $(\af-\af(y,y))e_{xy}\cdot e_y=(\af-\af(y,y))e_{xy}$ and $e_y\cdot (\af-\af(y,y))e_{xy}=0$, we obtain \cref{p_n(af.e_xy.e_y...e_y)=(af-af(yy))e_xy} by induction for all $n\ge 3$.
\end{proof}

We shall need generalizations of Lemmas 2.3 and 3.4 from \cite{ZhangKhrypchenko}. To this end, 
we shall slightly change the definition of the restriction used in \cite{ZhangKhrypchenko}, as we did in \cite{Kh-loc}.

\begin{defn}\label{af|_x^y-defn}
For any $\af\in FI(P,R)$ and $x\le y$, we define {\it the restriction of $\af$} to $[x,y]=\{z\in X\mid x\le z\le y\}$ to be
\begin{align}\label{alpha|_x^y}
	\af|_x^y=\af(x,y)e_{xy}+\sum_{x\le v<y}\af(x,v)e_{xv}+\sum_{x<u\le y}\af(u,y)e_{uy}.
\end{align}
\end{defn}

Clearly, the mapping $\af\mapsto\af|_x^y$ is linear. Moreover, we have the next lemma, whose proof is straightforward.
\begin{lem}\label{properties-af|_x^y}
For any $\af\in FI(P,R)$, we have
\begin{enumerate}
	\item $(\alpha|_x^y)|_x^y=\alpha|_x^y$;\label{double-restr=restr}
	\item $(\af\bt)(x,y)=(\af|_x^y\bt)(x,y)=(\af\bt|_{x}^{y})(x,y)$.\label{(af.bt)(xy)=(af|_x^y.bt)(xy)=(af.bt|_x^y)(xy)}
\end{enumerate}
\end{lem}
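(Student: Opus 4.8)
The plan is to prove both parts by direct, coordinatewise computation using the convolution formula \cref{conv-series} together with \cref{e_x.af.e_y}. For part \eqref{double-restr=restr}, I would first observe that the restriction $\af|_x^y$ is supported only on pairs of the form $(x,v)$ with $x\le v\le y$ and $(u,y)$ with $x\le u\le y$; that is, its nonzero coordinates lie on the ``bottom row'' emanating from $x$ and the ``top column'' ending at $y$, together with the corner $(x,y)$. Since applying the restriction operator again only re-reads the coordinates $\af|_x^y(x,v)$, $\af|_x^y(u,y)$ and $\af|_x^y(x,y)$, and each of these equals the corresponding coordinate of $\af$ (because those pairs already appear in the support of $\af|_x^y$), the second restriction reproduces $\af|_x^y$ verbatim. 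This is essentially idempotence of a coordinate-masking operator, so I expect it to reduce to checking that the defining sum in \cref{alpha|_x^y} picks out exactly the coordinates it already retains.

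For part \eqref{(af.bt)(xy)=(af|_x^y.bt)(xy)=(af.bt|_x^y)(xy)}, the key is to expand the $(x,y)$-coordinate of the product via \cref{conv-series}:
\begin{align*}
(\af\bt)(x,y)=\sum_{x\le z\le y}\af(x,z)\bt(z,y).
\end{align*}
The point is that every term in this sum uses only values $\af(x,z)$ with $x\le z\le y$, and by \cref{alpha|_x^y} each such value coincides with $\af|_x^y(x,z)$ (these are precisely the ``bottom row'' coordinates retained by the restriction). Hence replacing $\af$ by $\af|_x^y$ leaves every summand unchanged, giving $(\af\bt)(x,y)=(\af|_x^y\bt)(x,y)$. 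The symmetric identity $(\af\bt)(x,y)=(\af\bt|_x^y)(x,y)$ follows in the same way, noting that the sum only sees the ``top column'' values $\bt(z,y)$ with $x\le z\le y$, all of which are preserved in $\bt|_x^y$.

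The main subtlety, rather than any real obstacle, is bookkeeping the three pieces of the support in \cref{alpha|_x^y} and verifying that the index $z$ ranging over $x\le z\le y$ in the convolution never falls outside the retained coordinates: when $z=x$ or $z=y$ one lands on the corner or an endpoint, and for $x<z<y$ one lands on a genuine bottom-row (resp. top-column) entry. Since \cref{e_x.af.e_y} guarantees $e_x\af e_y=\af(x,y)e_{xy}$, one can also phrase the whole argument operator-theoretically by multiplying on the appropriate side by $e_x$ or $e_y$, but the coordinatewise version is the cleanest. I expect the proof to be short enough that the paper's remark ``whose proof is straightforward'' is fully justified.
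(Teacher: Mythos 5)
Your proof is correct and is exactly the straightforward coordinatewise verification the paper has in mind (the paper omits the proof entirely, calling it straightforward): the restriction retains precisely the coordinates $\af(x,z)$ with $x\le z\le y$ and $\af(u,y)$ with $x\le u\le y$, so restricting twice changes nothing, and the convolution sum $\sum_{x\le z\le y}\af(x,z)\bt(z,y)$ only reads retained coordinates. No gaps; nothing further is needed.
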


\begin{lem}\label{L(af)(xy)=L(restr-of-af)(xy)}
Let $L$ be a Lie $n$-derivation of $FI(P,R)$, where $n\ge 2$, and $x<y$. Then
\begin{align}\label{L(af)(xy)=L(af|_x^y)(xy)}
 L(\af)(x,y)=L(\af|_x^y)(x,y).
\end{align}
\end{lem}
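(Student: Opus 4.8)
The plan is to reduce the claim to the vanishing of a single coefficient and then read that coefficient off from the defining identity of $L$ applied to a carefully chosen nested commutator. Since $L$ is $R$-linear and $\af\mapsto\af|_x^y$ is linear, \eqref{L(af)(xy)=L(af|_x^y)(xy)} is equivalent to $L(\gamma)(x,y)=0$ for $\gamma:=\af-\af|_x^y$. By the definition of the restriction \eqref{alpha|_x^y}, the series $\gamma$ vanishes on the whole row $x$ and the whole column $y$ inside the interval $[x,y]$; that is,
\begin{align*}
\gamma(x,c)=0\quad\text{and}\quad\gamma(c,y)=0\qquad\text{for every }c\text{ with }x\le c\le y
\end{align*}
(in particular $\gamma(x,y)=0$). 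This is the only feature of $\gamma$ that will be used.

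The computational engine is the elementary identity $[\mu,e_y](x,y)=\mu(x,y)$, valid for any $\mu\in FI(P,R)$ and any $x<y$, which follows at once from the convolution \eqref{conv-series} together with \eqref{e_x.af.e_y}, since $(\mu e_y)(x,y)=\mu(x,y)$ while $(e_y\mu)(x,y)=0$. Applying it repeatedly allows me to strip trailing factors $e_y$ off any expression $p_m(\dots,e_y)$ without altering its $(x,y)$-coefficient.

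Assume first that $n\ge 3$. As $\gamma(x,y)=0$, formula \eqref{p_n(e_x.af.e_y...)(xy)=af(xy)} gives $p_n(e_x,\gamma,e_y,\dots,e_y)=\gamma(x,y)e_{xy}=0$, so plugging this tuple into \eqref{Lie-n-der-formula} yields
\begin{align*}
0=p_n(L(e_x),\gamma,e_y,\dots,e_y)+p_n(e_x,L(\gamma),e_y,\dots,e_y)+\sum_{k=3}^{n}p_n(e_x,\gamma,e_y,\dots,L(e_y),\dots,e_y),
\end{align*}
where in the $k$-th summand of the last sum the factor $L(e_y)$ occupies slot $k$. The slot-$2$ summand equals $p_n(e_x,L(\gamma),e_y,\dots,e_y)=L(\gamma)(x,y)e_{xy}$, again by \eqref{p_n(e_x.af.e_y...)(xy)=af(xy)}, so it suffices to show that every remaining summand has vanishing $(x,y)$-coefficient. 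For the slot-$1$ term, stripping the trailing $e_y$'s reduces its $(x,y)$-coefficient to $[L(e_x),\gamma](x,y)=\sum_{x\le c\le y}\bigl(L(e_x)(x,c)\gamma(c,y)-\gamma(x,c)L(e_x)(c,y)\bigr)$, which is $0$ termwise by the row/column vanishing of $\gamma$. For $3\le k\le n$, stripping the trailing $e_y$'s leaves $\bigl[\,p_{k-1}(e_x,\gamma,e_y,\dots,e_y),\,L(e_y)\,\bigr](x,y)$; when $k\ge 4$ the inner commutand is $\gamma(x,y)e_{xy}=0$ by \eqref{p_n(e_x.af.e_y...)(xy)=af(xy)}, while the single remaining case $k=3$, namely $\bigl[[e_x,\gamma],L(e_y)\bigr](x,y)$, is killed by a two-line computation showing that $[e_x,\gamma]$ itself vanishes on row $x$ and column $y$ of $[x,y]$. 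Hence $L(\gamma)(x,y)=0$ for $n\ge 3$.

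The case $n=2$ is then subsumed, since a Lie $2$-derivation is in particular a Lie $3$-derivation (the relation that a Lie $n$-derivation is a Lie $(n+k(n-1))$-derivation, with $n=2$, $k=1$), so the displayed argument applies to it verbatim. The only real difficulty is organizational, namely checking that all of the $n-1$ ``wrong'' summands of \eqref{Lie-n-der-formula} contribute nothing to the $(x,y)$-coefficient; the stripping identity, combined with the fact that $\gamma$ is supported off row $x$ and column $y$ of $[x,y]$, funnels each of them into one of the two elementary commutator computations above, so no genuinely new estimate is needed.
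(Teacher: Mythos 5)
Your proof is correct and follows essentially the same route as the paper: both apply the defining identity \cref{Lie-n-der-formula} to the tuple $(e_x,\cdot,e_y,\dots,e_y)$, collapse the resulting terms via \cref{p_n(e_x.af.e_y.etc)}, and reduce $n=2$ to $n=3$ by viewing a Lie derivation as a Lie triple derivation. The only difference is organizational: you subtract first and show $L(\af-\af|_x^y)(x,y)=0$ using the row/column vanishing of the difference, whereas the paper keeps $\af$ general and uses \cref{properties-af|_x^y} to replace $\af$ by $\af|_x^y$ in each error term --- equivalent bookkeeping for the same computation.
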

\begin{proof}
	It suffices to verify the case $n\ge 3$, since each Lie derivation is a Lie triple derivation. 
	We shall use \cref{p_n(e_x.af.e_y...)(xy)=af(xy),p_n(af.e_y...)(xy)=af(xy),Lie-n-der-formula}:
	\begin{align}
	L(\af)(x,y)&=p_n(e_x,L(\af),e_y,\dots,e_y)(x,y)\notag\\
	&=L(p_n(e_x,\af,e_y,\dots,e_y))(x,y)-p_n(L(e_x),\af,e_y,\dots,e_y)(x,y)\notag\\
	&\quad-p_n(e_x,\af,L(e_y),e_y,\dots e_y)(x,y)\notag\\
	&\quad-\sum_{i=1}^{n-3} p_n(e_x,\af,\underbrace{e_y,\dots,e_y}_{i},L(e_y),e_y,\dots,e_y)(x,y)\notag\\
	&=\af(x,y)L(e_{xy})(x,y)-p_2(L(e_x),\af)(x,y)\notag\\
	&\quad-p_3(e_x,\af,L(e_y))(x,y)-(n-3)p_2(\af(x,y)e_{xy},L(e_y))(x,y).\label{L(af)(xy)-expanded}
	\end{align}
	Taking into account the fact that $\af(x,y)=\af|_x^y(x,y)$ together with \cref{properties-af|_x^y}\cref{(af.bt)(xy)=(af|_x^y.bt)(xy)=(af.bt|_x^y)(xy)}, 
	we arrive at
	\begin{align*}
		p_2(L(e_x),\af)(x,y)&=(L(e_x)\af-\af L(e_x))(x,y)\\
		&=(L(e_x)\af|_x^y-\af|_x^y L(e_x))(x,y)\\
		&=p_2(L(e_x),\af|_x^y)(x,y).
	\end{align*}
	Furthermore, using \cref{properties-af|_x^y}\cref{(af.bt)(xy)=(af|_x^y.bt)(xy)=(af.bt|_x^y)(xy)} once again, we have
	\begin{align*}
		p_3(e_x,\af,L(e_y))(x,y)&=((e_x\af-\af e_x)L(e_y)-L(e_y)(e_x\af-\af e_x))(x,y)\\
		&=(\af L(e_y))(x,y)-\af(x,x)L(e_y)(x,y)-L(e_y)(x,x)\af(x,y)\\
		&=(\af|_x^y L(e_y))(x,y)-\af|_x^y(x,x)L(e_y)(x,y)-L(e_y)(x,x)\af|_x^y(x,y)\\
		&=p_3(e_x,\af|_x^y,L(e_y))(x,y).
	\end{align*}
	Thus, we see that one can replace $\af$ by $\af|_x^y$ in the right-hand side of \cref{L(af)(xy)-expanded}, whence \cref{L(af)(xy)=L(af|_x^y)(xy)}.	
	
\end{proof}

\begin{lem}\label{L(e_xy)(uy)-is-L(e_xx)(uy)}
	Let $L$ be a Lie $n$-derivation of $FI(P,R)$, where $n\ge 2$, and $u<x<y<v$. Then
	\begin{align}
	L(e_{xy})(x,v)&=L(e_y)(y,v),\label{L(e_xy)(xv)=L(e_y)(yv)}\\
	L(e_{xy})(u,y)&=L(e_x)(u,x).\label{L(e_xy)(uy)=L(e_x)(ux)}
	\end{align}
\end{lem}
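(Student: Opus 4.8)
The plan is to obtain each identity by applying the Lie $n$-derivation rule \cref{Lie-n-der-formula} to a product of the elements $e_{xy}$ and $e_w$ whose iterated commutator $p_n$ vanishes, and then to read off one prescribed entry via \cref{p_n(e_x.af.e_y.etc)}. The key point I anticipate is that a single such application will \emph{not} connect $L(e_{xy})$ with $L(e_y)$ (resp.\ $L(e_x)$) directly; it instead produces an auxiliary ``pivot'' entry of $L(e_v)$ (resp.\ $L(e_u)$). So I would run two parallel computations for each identity and cancel the pivot. Throughout I shall use the elementary fact that, since $e_w\gamma e_w$ picks out row and column $w$, conjugating a series by $e_w$ on the right in $[\,\cdot\,,e_w]$ preserves any entry lying off row and column $w$.

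For \cref{L(e_xy)(xv)=L(e_y)(yv)}, note that $x<y<v$ forces $e_{xy}e_v=e_ve_{xy}=0$, so $p_2(e_{xy},e_v)=0$ and hence $p_n(e_{xy},e_v,\dots,e_v)=0$. Feeding this into \cref{Lie-n-der-formula} and observing that every term in which $L$ lands on a slot $\ge 3$ vanishes---its leading pair is still $e_{xy},e_v$, so its $p_2$ is $0$---I am left with
\[ 0=p_n(L(e_{xy}),e_v,\dots,e_v)+p_n(e_{xy},L(e_v),e_v,\dots,e_v). \]
Evaluating at $(x,v)$, the first summand is $L(e_{xy})(x,v)$ by \cref{p_n(af.e_y...)(xy)=af(xy)}, and a short direct computation with \cref{e_x.af.e_y} and the convolution rule---in the style of \cref{p_n(e_x.af.e_y.etc)}---shows the second is $L(e_v)(y,v)$; thus $L(e_{xy})(x,v)=-L(e_v)(y,v)$. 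Repeating the argument with the orthogonal idempotents $e_y,e_v$, where $p_n(e_y,e_v,\dots,e_v)=0$, gives $0=L(e_y)(y,v)+L(e_v)(y,v)$ at the entry $(y,v)$, both surviving summands now coming straight from \cref{p_n(af.e_y...)(xy)=af(xy),p_n(e_x.af.e_y...)(xy)=af(xy)}. Subtracting eliminates $L(e_v)(y,v)$ and yields the claim.

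The proof of \cref{L(e_xy)(uy)=L(e_x)(ux)} is dual. Since $u<x<y$, we have $e_ue_{xy}=e_{xy}e_u=0$, so $p_n(e_u,e_{xy},e_y,\dots,e_y)=0$, where the trailing $e_y$'s are chosen so that the target entry sits in column $y$. Applying \cref{Lie-n-der-formula} and again discarding all terms with $L$ in a slot $\ge 3$ (each killed by $p_2(e_u,e_{xy})=0$) leaves $0=p_n(L(e_u),e_{xy},e_y,\dots,e_y)+p_n(e_u,L(e_{xy}),e_y,\dots,e_y)$. At $(u,y)$ the second summand equals $L(e_{xy})(u,y)$ by \cref{p_n(e_x.af.e_y...)(xy)=af(xy)}, while the first, by a direct computation of the same type, equals $L(e_u)(u,x)$; hence $L(e_{xy})(u,y)=-L(e_u)(u,x)$. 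The companion relation $0=L(e_u)(u,x)+L(e_x)(u,x)$ at $(u,x)$ comes from $p_n(e_u,e_x,\dots,e_x)=0$ together with \cref{p_n(af.e_y...)(xy)=af(xy),p_n(e_x.af.e_y...)(xy)=af(xy)}, and cancelling $L(e_u)(u,x)$ finishes the proof.

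The hard part is recognizing that the two stated equalities are not single-application consequences of \cref{Lie-n-der-formula}: one must introduce, and then eliminate, the pivot entries $L(e_v)(y,v)$ and $L(e_u)(u,x)$ by means of the auxiliary orthogonal-idempotent products. The remaining effort is routine bookkeeping---checking that precisely the first two slots of each $p_n$ contribute, and carrying out the two ``off-formula'' entry computations (the $e_{xy}$-in-the-first-slot case for \cref{L(e_xy)(xv)=L(e_y)(yv)} and the $e_{xy}$-in-the-second-slot case for \cref{L(e_xy)(uy)=L(e_x)(ux)}), which follow the pattern of \cref{p_n(e_x.af.e_y.etc)} and should also be verified directly in the base case $n=2$, where one cannot invoke the $n\ge 3$ parts of those formulas.
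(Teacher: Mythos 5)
Your argument is correct, but it takes a genuinely different route from the paper's. The paper does \emph{not} need the pivot entries you introduce: it applies $L$ to the non-vanishing identity $e_{xy}=p_n(e_{xy},e_y,\dots,e_y)$ (resp.\ $e_{xy}=(-1)^{n+1}p_n(e_{xy},e_x,\dots,e_x)$) and evaluates at $(x,v)$ (resp.\ $(u,y)$). Since that entry lies off both row and column $y$, every term whose outermost operation is $[\,\cdot\,,e_y]$ --- namely $p_n(L(e_{xy}),e_y,\dots,e_y)$ and all summands $p_{i+2}(e_{xy},L(e_y),e_y,\dots,e_y)$ with $i\ge 1$ --- vanishes at $(x,v)$, leaving only $p_2(e_{xy},L(e_y))(x,v)=L(e_y)(y,v)$ in a single stroke; so your opening prediction that one application cannot connect $L(e_{xy})$ with $L(e_y)$ is false for this choice of seed identity. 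Your version instead starts from the vanishing commutators $p_n(e_{xy},e_v,\dots,e_v)=0$ and $p_n(e_u,e_{xy},e_y,\dots,e_y)=0$, which is why the auxiliary entries $L(e_v)(y,v)$ and $L(e_u)(u,x)$ appear and must be cancelled against the orthogonality relations $L(e_y)(y,v)+L(e_v)(y,v)=0$ and $L(e_u)(u,x)+L(e_x)(u,x)=0$; note that these are exactly \cref{L(e_x)(xy)+L(e_y)(xy)=0} of \cref{L(e_x)-and-L(e_xy)} (relabelled), whose proof does not depend on the present lemma, so there is no circularity, and you rederive them anyway. Both routes are sound and rest on the same computational kernel (the entrywise evaluation of $p_2$ against $e_{xy}$ and the fact that trailing commutators with a diagonal idempotent act trivially on entries off its row and column); the paper's is shorter (one application of \cref{Lie-n-der-formula} per identity), while yours costs a second application and a cancellation but uses only identically vanishing $p_n$'s, and your remark about verifying the $n=2$ case separately where \cref{p_n(e_x.af.e_y...)(xy)=af(xy)} does not apply is a legitimate point of care.
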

\begin{proof}
	For \cref{L(e_xy)(xv)=L(e_y)(yv)}, apply $L$ to the equality 
	\begin{align}\label{e_xy=p_n(e_xy.e_y...e_y)}
		e_{xy}=p_n(e_{xy},e_y,\dots,e_y)
	\end{align}
	and use \cref{Lie-n-der-formula} to get
	\begin{align}
		L(e_{xy})&=L(p_n(e_{xy},e_y,\dots,e_y))\notag\\
		&=p_n(L(e_{xy}),e_y,\dots,e_y)+p_n(e_{xy},L(e_y),e_y,\dots,e_y)\notag\\
		&\quad+\sum_{i=0}^{n-3}p_n(e_{xy},e_y,\dots,e_y,L(e_y),\underbrace{e_y,\dots,e_y}_i)\notag\\
		&=p_n(L(e_{xy}),e_y,\dots,e_y)+\sum_{i=0}^{n-2}p_{i+2}(e_{xy},L(e_y),\underbrace{e_y,\dots,e_y}_i).\label{L(e_xy)=p_n(L(e_xy).e_y...e_y)+sum-p_(i+2)(e_xy.L(e_y).e_y...e_y)}
	\end{align}
	Observe that $[\bt,e_y](x,v)=0$ for an arbitrary $\bt\in FI(P,R)$, since $y\not\in\{x,v\}$. Hence, taking the values of the both sides of \cref{L(e_xy)=p_n(L(e_xy).e_y...e_y)+sum-p_(i+2)(e_xy.L(e_y).e_y...e_y)} at $(x,v)$, we obtain
	\begin{align*}
		L(e_{xy})(x,v)=p_2(e_{xy},L(e_y))(x,v)=(e_{xy}L(e_y)-L(e_y)e_{xy})(x,v)=L(e_y)(y,v),
	\end{align*}
	proving \cref{L(e_xy)(xv)=L(e_y)(yv)}.
	
	By invoking $e_{xy}=(-1)^{n+1} p_n(e_{xy}, e_x,\dots,e_x)$, one can show \cref{L(e_xy)(uy)=L(e_x)(ux)} in the same way as \cref{L(e_xy)(xv)=L(e_y)(yv)}. 
\end{proof}

\begin{lem}\label{L(af)_xx=L(af)_yy}
	Let $L$ be a Lie $n$-derivation of $FI(P,R)$ and $\af\in FI(P,R)$. If $R$ is $(n-1)$-torsion free, then 
	\begin{align}\label{L(af)(xx)=L(af)(yy)}
		L(\af)(x,x)=L(\af)(y,y)
	\end{align}
	for all $x<y$.
\end{lem}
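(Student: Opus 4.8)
The plan is to use the off-diagonal entry at $(x,y)$ as a detector for the difference of the two diagonal entries. First I would record, reading off \cref{p_n(af.e_xy.e_y...e_y)=(af-af(yy))e_xy}, that for every $\gamma\in FI(P,R)$ and $n\ge 3$ the element $p_n(\gamma,e_{xy},e_y,\dots,e_y)$ equals $(\gamma-\gamma(y,y))e_{xy}$, whose $(x,y)$-entry is $\gamma(x,x)-\gamma(y,y)$, and that $[\gamma,e_y](x,y)=\gamma(x,y)$, so that appending extra factors $e_y$ never alters the $(x,y)$-entry. Applying $L$ to \cref{p_n(af.e_xy.e_y...e_y)=(af-af(yy))e_xy}, expanding the right-hand side by \cref{Lie-n-der-formula}, and evaluating at $(x,y)$, the slot carrying $\af$ produces exactly $L(\af)(x,x)-L(\af)(y,y)$, which is what we want. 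The remaining slots I would simplify as follows: the left-hand side, by \cref{L(af)(xy)=L(restr-of-af)(xy)} applied to $(\af-\af(y,y))e_{xy}$ (whose restriction to $[x,y]$ is $(\af(x,x)-\af(y,y))e_{xy}$), equals $(\af(x,x)-\af(y,y))L(e_{xy})(x,y)$; the slot carrying $e_{xy}$ collapses to $[\af,L(e_{xy})](x,y)$; and each of the $n-2$ slots carrying $e_y$ collapses, by the stabilization above, to the common value $(\af(x,x)-\af(y,y))\bigl(L(e_y)(y,y)-L(e_y)(x,x)\bigr)$.

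The decisive case is $\af=e_y$. Then $\af(x,x)-\af(y,y)=-1$, so the left-hand side and the $[\af,L(e_{xy})](x,y)$ term both reduce to $-L(e_{xy})(x,y)$ and cancel across the equation, while the $n-2$ repeated slots contribute $(n-2)\bigl(L(e_y)(x,x)-L(e_y)(y,y)\bigr)$ and the $\af$-slot contributes $L(e_y)(x,x)-L(e_y)(y,y)$. The identity thus collapses to $(n-1)\bigl(L(e_y)(x,x)-L(e_y)(y,y)\bigr)=0$, and this is exactly where the hypothesis that $R$ is $(n-1)$-torsion free enters: it forces $L(e_y)(x,x)=L(e_y)(y,y)$. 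Taking $\af=e_x$ in the same identity gives $L(e_x)(x,x)-L(e_x)(y,y)=(n-2)\bigl(L(e_y)(x,x)-L(e_y)(y,y)\bigr)=0$, and for $z\notin\{x,y\}$ the choice $\af=e_z$ makes every surviving term vanish at $(x,y)$ with no torsion needed, so that $L(e_z)(x,x)=L(e_z)(y,y)$ for all $z\in P$. (For $n=2$ the same scheme works verbatim with $[\af,e_{xy}]$ in place of \cref{p_n(af.e_xy.e_y...e_y)=(af-af(yy))e_xy}, there being no repeated slots.)

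For arbitrary $\af$ I would then pass to the restriction $\af|_x^y$. The identity derived above exhibits $L(\af)(x,x)-L(\af)(y,y)$ as an $R$-linear combination of the entries $\af(x,x),\af(y,y),\af(x,y)$ and $\af(x,z),\af(z,y)$ with $x<z<y$, and all of these are supported by $\af|_x^y$ (cf. \cref{alpha|_x^y}); since $\af|_x^y$ is a finite combination of the matrix units $e_{xv}$ $(x\le v\le y)$ and $e_{uy}$ $(x\le u\le y)$, linearity reduces the claim to these finitely many units. The main obstacle is precisely the slot carrying $e_{xy}$: for the off-diagonal units $e_{xv}$ and $e_{uy}$ the term $[\af,L(e_{xy})](x,y)$ no longer cancels, and one is left needing to know that the interior entries $L(e_{xy})(x,v)$ and $L(e_{xy})(v,y)$ with $x<v<y$ vanish, together with $L(e_{xy})(x,x)=L(e_{xy})(y,y)$. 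I expect to obtain these from \cref{L(e_xy)(uy)-is-L(e_xx)(uy)} and from further applications of \cref{Lie-n-der-formula} to factorizations such as $e_{xy}=[e_{xv},e_{vy}]$, thereby feeding everything back to the diagonal values of $L$ on idempotents settled above, most cleanly by induction on the length of the interval $[x,y]$. Pinning down these interior entries of $L(e_{xy})$ is the crux of the argument.
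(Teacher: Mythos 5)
Your first two paragraphs coincide with the paper's own argument: applying $L$ to $p_n(\af,e_{xy},e_y,\dots,e_y)=(\af-\af(y,y))e_{xy}$, evaluating everything at $(x,y)$, and specializing to $\af=e_y$ to get $(n-1)\bigl(L(e_y)(x,x)-L(e_y)(y,y)\bigr)=0$ is exactly how the paper reaches \cref{(af(xx)-af(yy))(1+(n-3)(L(e_y)(yy)-L(e_y)(xx)))=0} and then invokes $(n-1)$-torsion freeness. The gap is your third paragraph, and it is a genuine one: you treat the cancellation of the $e_{xy}$-slot $[\af,L(e_{xy})](x,y)$ against the left-hand side as a coincidence special to $\af=e_y$, and then launch an unfinished programme (reduction to matrix units, determination of the interior entries of $L(e_{xy})$ by induction on the length of $[x,y]$). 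That programme fails as stated: $P$ is an arbitrary poset, so $[x,y]$ may contain infinite chains and there is no length to induct on; \cref{L(e_xy)(uy)-is-L(e_xx)(uy)} controls entries of $L(e_{xy})$ \emph{outside} $[x,y]$, not the interior ones you need; the factorization $e_{xy}=[e_{xv},e_{vy}]$ requires an intermediate point, which need not exist; and for the unit $\af=e_{xy}$ itself --- which your reduction does require, since you need $L(e_{xy})(x,x)=L(e_{xy})(y,y)$ --- your identity degenerates to $0=0$ and yields nothing.

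The idea you are missing is that the interior entries of $L(e_{xy})$ are already controlled by \cref{L(af)(xy)=L(af|_x^y)(xy)}, applied not at $(x,y)$ but at the interior pairs: for $x<z<y$ one has $e_{xy}|_z^y=0=e_{xy}|_x^z$, hence $L(e_{xy})(z,y)=L\bigl(e_{xy}|_z^y\bigr)(z,y)=0$ and likewise $L(e_{xy})(x,z)=0$. Therefore $(\af L(e_{xy}))(x,y)=\af(x,x)L(e_{xy})(x,y)$ and $(L(e_{xy})\af)(x,y)=\af(y,y)L(e_{xy})(x,y)$, so that $[\af,L(e_{xy})](x,y)=(\af(x,x)-\af(y,y))L(e_{xy})(x,y)$ cancels the left-hand side for \emph{every} $\af$ at once, and \cref{(af(xx)-af(yy))(1+(n-3)(L(e_y)(yy)-L(e_y)(xx)))=0} holds for arbitrary $\af$; once $L(e_y)(x,x)=L(e_y)(y,y)$ is known, the right-hand side vanishes and the lemma is finished with no reduction to matrix units at all. (The diagonal boundary terms $\af(x,y)L(e_{xy})(y,y)$ and $L(e_{xy})(x,x)\af(x,y)$, which you correctly worry about, are handled by the paper with the same restriction device via $e_{xy}|_z^y=\delta_{xz}e_{xy}$; if one prefers to stay with strict pairs in \cref{L(af)(xy)=L(af|_x^y)(xy)}, evaluating $L$ applied to $e_{xy}=p_n(e_x,e_{xy},e_y,\dots,e_y)$ at $(y,y)$ and at $(x,x)$ gives $L(e_{xy})(y,y)=L(e_{xy})(x,x)=0$ directly, since every summand is an outer commutator killed at these diagonal positions.)
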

\begin{proof}
	Applying $L$ to $p_n(\af,e_{xy},e_y,\dots,e_y)$ and using \cref{Lie-n-der-formula}, we have 
	\begin{align}
	L\left(p_n(\af,e_{xy},e_y,\dots,e_y)\right)&=p_n(L(\af),e_{xy},e_y,\dots,e_y)+p_n(\af,L(e_{xy}),e_y,\dots,e_y)\notag\\
	&\quad+p_n(\af,e_{xy},L(e_y),e_y,\dots,e_y)\label{p_n(af.e_xy.L(e_y).e_y...e_y)}\\
	&\quad+\sum_{i=1}^{n-3}p_n(\af,e_{xy},\underbrace{e_y,\dots,e_y}_i,L(e_y),e_y,\dots,e_y),\label{L([e_xy.af])(xy)=(L(e_xy)af)(xy)-(af.L(e_xy))(xy)+L(af)(yy)-L(af)(xx)}
	\end{align}
	where the summands \cref{L([e_xy.af])(xy)=(L(e_xy)af)(xy)-(af.L(e_xy))(xy)+L(af)(yy)-L(af)(xx),p_n(af.e_xy.L(e_y).e_y...e_y)} do not appear for $n=2$.
	
	By \cref{e_xy=p_n(e_xy.e_y...e_y),p_n(af.e_y...)(xy)=af(xy)}, we get
	\begin{align*}
		p_n(L(\af),e_{xy},e_y,\dots,e_y)(x,y)&=p_2(L(\af),e_{xy})(x,y)\\
		&=(L(\af) e_{xy}-e_{xy}L(\af))(x,y)\\
		&=L(\af)(x,x)-L(\af)(y,y).
	\end{align*}
	
	Furthermore, using \cref{p_n(af.e_y...)(xy)=af(xy)}, we have
	\begin{align*}
		p_n(\af,L(e_{xy}),e_y,\dots,e_y)(x,y)=p_2(\af,L(e_{xy}))(x,y)=(\af L(e_{xy})-L(e_{xy})\af)(x,y).
	\end{align*}
	In view of \cref{L(af)(xy)=L(af|_x^y)(xy)}, we know that 
	\begin{align*}
	\left(\af L(e_{xy})\right)(x,y)&=\sum_{x\le z\le y}\af(x,z)L(e_{xy})(z,y)\notag\\
	&=\sum_{x\le z\le y}\af(x,z)L(e_{xy}|_z^y)(z,y)\notag\\
	&=\af(x,x)L(e_{xy})(x,y),
	\end{align*}
	which is due to $e_{xy}|_z^y=\delta_{xz}e_{xy}$ for $x\le z\le y$. In an analogous manner, one can show 
	\begin{align*}
	\left(L(e_{xy})\af\right)(x,y)=\af(y,y)L(e_{xy})(x,y).
	\end{align*}
	
         Considering \cref{p_n(af.e_y...)(xy)=af(xy)}, we have
	\begin{align*}
		p_n(\af,e_{xy},L(e_y),e_y,\dots,e_y)(x,y)&=p_3(\af,e_{xy},L(e_y))(x,y)\\
		&=((\af e_{xy}-e_{xy}\af)L(e_y)-L(e_y)(\af e_{xy}-e_{xy}\af))(x,y)\\
		&=\af(x,x)L(e_y)(y,y)-\af(y,y)L(e_y)(y,y)\\
		&\quad-L(e_y)(x,x)\af(x,x)+L(e_y)(x,x)\af(y,y)\\
		&=(\af(x,x)-\af(y,y))(L(e_y)(y,y)-L(e_y)(x,x)).
	\end{align*}
	
	Finally, by invoking \cref{p_n(af.e_y...)(xy)=af(xy),p_n(af.e_xy.e_y...e_y)=(af-af(yy))e_xy} 
	we assert that 
	\begin{align*}
		p_n(\af,e_{xy},\underbrace{e_y,\dots,e_y}_i,L(e_y),e_y,\dots,e_y)(x,y)&=p_{i+3}(\af,e_{xy},\underbrace{e_y,\dots,e_y}_i,L(e_y))(x,y)\\
		&=p_2((\af-\af(y,y))e_{xy},L(e_y))(x,y),
	\end{align*}  
	the latter being
	\begin{align*}
		&((\af-\af(y,y))e_{xy}L(e_y)-L(e_y)(\af-\af(y,y))e_{xy})(x,y)\\
		&\quad=(\af(x,x)-\af(y,y))L(e_y)(y,y)-L(e_y)(x,x)(\af(x,x)-\af(y,y))\\
		&\quad=(\af(x,x)-\af(y,y))(L(e_y)(y,y)-L(e_y)(x,x)).
	\end{align*}
	
	 On the other hand, by \cref{alpha|_x^y,L(af)(xy)=L(af|_x^y)(xy),p_n(af.e_xy.e_y...e_y)=(af-af(yy))e_xy}, we 
	 arrive at
	\begin{align*}
		L\left(p_n(\af,e_{xy},e_y,\dots,e_y)\right)(x,y)&=L\left((\af-\af(y,y))e_{xy}\right)(x,y)\\
		&=L\left(((\af-\af(y,y))e_{xy})|_x^y\right)(x,y)\\
		&=L\left(\af(x,x)e_{xy}-\af(y,y)e_{xy}\right)(x,y)\\
		&=(\af(x,x)-\af(y,y))L\left(e_{xy}\right)(x,y).
	\end{align*}
	
	Thus, taking the values of the both sides of \cref{L([e_xy.af])(xy)=(L(e_xy)af)(xy)-(af.L(e_xy))(xy)+L(af)(yy)-L(af)(xx)} at $(x,y)$, we obtain
	\begin{align*}
		(\af(x,x)-\af(y,y))L\left(e_{xy}\right)(x,y)&=L(\af)(x,x)-L(\af)(y,y)\\
		&\quad+(\af(x,x)-\af(y,y))L(e_{xy})(x,y)\\
		&\quad+(n-2)(\af(x,x)-\af(y,y))(L(e_y)(y,y)-L(e_y)(x,x)),
	\end{align*}
	whence
	\begin{align}\label{(af(xx)-af(yy))(1+(n-3)(L(e_y)(yy)-L(e_y)(xx)))=0}
		L(\af)(x,x)-L(\af)(y,y)=(n-2)(\af(x,x)-\af(y,y))(L(e_y)(x,x)-L(e_y)(y,y)).
	\end{align}
	If $n=2$, then we immediately obtain \cref{L(af)(xx)=L(af)(yy)}. If $n\ge 3$, then
	taking $\af=e_y$ in \cref{(af(xx)-af(yy))(1+(n-3)(L(e_y)(yy)-L(e_y)(xx)))=0}, we get
	\begin{align*}
		L(e_y)(x,x)-L(e_y)(y,y)=-(n-2)(L(e_y)(x,x)-L(e_y)(y,y)),
	\end{align*}
	i.e.
	\begin{align*}
	(n-1)(L(e_y)(x,x)-L(e_y)(y,y))=0.
	\end{align*}
	If $R$ is $(n-1)$-torsion free, the latter yields $L(e_y)(x,x)-L(e_y)(y,y)=0$, proving thus \cref{L(af)(xx)=L(af)(yy)}.
\end{proof}

\begin{defn}\label{af_D-defn}
	Given $\af\in FI(P,R)$, define the \textit{diagonal} of $\af$ to be
	\begin{align*}
		\af_D=\sum_{x\in P}\af(x,x)e_{xx}\in FI(P,R).
	\end{align*}
	An element $\af\in FI(P,R)$ is said to be \textit{diagonal} whenever $\af=\af_D$.
\end{defn}

Recall from~\cite{SpDo} that the center $Z(FI(P,R))$ of $FI(P,R)$ consists of diagonal elements $\af\in FI(P,R)$, such that $\af(x,x)=\af(y,y)$ for all $x<y$ in $P$.
 
\begin{cor}\label{L(af)_D-belons-to-the-center}
	Let $L$ be a Lie $n$-derivation of $FI(P,R)$ and $\af\in FI(P,R)$. If $R$ is $(n-1)$-torsion free, then $L(\af)_D\in Z(FI(P,R))$.
\end{cor}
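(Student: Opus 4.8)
The plan is to observe that, as a corollary, essentially all of the work has already been carried out in \cref{L(af)_xx=L(af)_yy}; what remains is to match its conclusion against the description of the center recalled from \cite{SpDo}. The two things to check are that $L(\af)_D$ is diagonal and that its diagonal entries agree along every comparable pair.

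First I would recall from \cref{af_D-defn} that $L(\af)_D=\sum_{x\in P}L(\af)(x,x)e_{xx}$, so $L(\af)_D$ is diagonal by construction: all of its off-diagonal coefficients vanish, and $L(\af)_D(x,x)=L(\af)(x,x)$ for every $x\in P$. This already secures the ``diagonal'' half of the center membership criterion with no computation.

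Next I would invoke \cref{L(af)_xx=L(af)_yy}. Since $R$ is assumed to be $(n-1)$-torsion free, that lemma gives $L(\af)(x,x)=L(\af)(y,y)$ for all $x<y$ in $P$. Rewriting this through the previous identification of the diagonal entries, I get $L(\af)_D(x,x)=L(\af)_D(y,y)$ for all $x<y$. Comparing with the characterization recalled from \cite{SpDo}, namely that $Z(FI(P,R))$ consists exactly of the diagonal elements $\bt$ satisfying $\bt(x,x)=\bt(y,y)$ for all $x<y$, I conclude that $L(\af)_D\in Z(FI(P,R))$.

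There is no genuine obstacle at this stage: the substantive content --- the cancellation argument producing the relation $(n-1)(L(e_y)(x,x)-L(e_y)(y,y))=0$ and the use of torsion-freeness to divide by $n-1$ --- has already been absorbed into \cref{L(af)_xx=L(af)_yy}. The only point worth flagging is that the $(n-1)$-torsion-free hypothesis is essential and enters solely through that lemma; dropping it would allow nonzero diagonal defects and break the center membership.
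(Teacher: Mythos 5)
Your proposal is correct and matches the paper's intent exactly: the corollary is stated immediately after \cref{L(af)_xx=L(af)_yy} and the recalled characterization of $Z(FI(P,R))$ from \cite{SpDo}, and is meant as the immediate consequence you describe. Nothing further is needed.
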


\begin{lem}\label{L(e_x)-and-L(e_xy)}
	Let $L$ be a Lie $n$-derivation of $FI(P,R)$. If $R$ is $(n-1)$-torsion free, then for all $x<y<z$:
	\begin{align}
		L(e_x)(x,y)+L(e_y)(x,y)&=0,\label{L(e_x)(xy)+L(e_y)(xy)=0}\\
		L(e_{xy})(x,y)+L(e_{yz})(y,z)&=L(e_{xz})(x,z).\label{L(e_xy)(xy)+L(e_yz)(yz)=L(e_xz)(xz)}
	\end{align}
\end{lem}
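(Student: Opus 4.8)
The plan is to obtain each relation by applying $L$ to a judiciously chosen $(n-1)$-th commutator that equals a matrix unit (or $0$) and then reading off a single coordinate, exactly in the spirit of \cref{L(af)_xx=L(af)_yy}. For \cref{L(e_x)(xy)+L(e_y)(xy)=0} I would exploit that $e_xe_y=e_ye_x=0$, so $[e_x,e_y]=0$ and hence $p_n(e_x,e_y,\dots,e_y)=0$. Applying $L$ and expanding via \cref{Lie-n-der-formula} produces a sum of $n$ terms, each obtained by inserting $L$ into one slot; I then pass to the $(x,y)$-coordinate. The first slot contributes $p_n(L(e_x),e_y,\dots,e_y)(x,y)=L(e_x)(x,y)$ by \cref{p_n(af.e_y...)(xy)=af(xy)}, and the second contributes $p_n(e_x,L(e_y),e_y,\dots,e_y)(x,y)=L(e_y)(x,y)$ by \cref{p_n(e_x.af.e_y...)(xy)=af(xy)} (the case $n=2$ being the direct computation $[e_x,L(e_y)](x,y)=L(e_y)(x,y)$). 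Every remaining term has innermost bracket $p_2(e_x,e_y)=0$ and so vanishes identically, leaving $0=L(e_x)(x,y)+L(e_y)(x,y)$.

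For \cref{L(e_xy)(xy)+L(e_yz)(yz)=L(e_xz)(xz)} I would instead use $[e_{xy},e_{yz}]=e_{xz}$ together with $e_{xz}e_z=e_{xz}$, so that $e_{xz}=p_n(e_{xy},e_{yz},e_z,\dots,e_z)$. Applying $L$, expanding, and taking the $(x,z)$-coordinate is the natural move. The recurring computational fact is that $[\gamma,e_z](x,z)=\gamma(x,z)$ for every $\gamma\in FI(P,R)$: since $x<z$ one has $(e_z\gamma)(x,z)=0$, while $(\gamma e_z)(x,z)=\gamma(x,z)$. Thus bracketing on the right by $e_z$ leaves the $(x,z)$-entry untouched and collapses each $p_n$ to a short commutator. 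The first slot reduces to $[L(e_{xy}),e_{yz}](x,z)=L(e_{xy})(x,y)$ and the second to $[e_{xy},L(e_{yz})](x,z)=L(e_{yz})(y,z)$, whereas each slot carrying $L(e_z)$ reduces to $[e_{xz},L(e_z)](x,z)=L(e_z)(z,z)-L(e_z)(x,x)$. Since the left-hand side of the evaluated identity is $L(e_{xz})(x,z)$, the asserted equality follows once these $L(e_z)$-contributions are shown to vanish.

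The main obstacle is precisely this vanishing, and it is here that the $(n-1)$-torsion freeness enters: by \cref{L(af)_xx=L(af)_yy} one has $L(e_z)(x,x)=L(e_z)(z,z)$, whence $[e_{xz},L(e_z)](x,z)=0$ and all $L(e_z)$-terms drop out. By contrast, the first identity needs no torsion hypothesis, its spurious terms vanishing for the structural reason that they contain the factor $[e_x,e_y]=0$. What remains is purely mechanical, namely organising the expansion \cref{Lie-n-der-formula} and repeatedly applying the rule $[\gamma,e_z](x,z)=\gamma(x,z)$, neither of which presents any real difficulty.
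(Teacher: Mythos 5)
Your proposal is correct and follows essentially the same route as the paper: for \cref{L(e_x)(xy)+L(e_y)(xy)=0} it expands $0=p_n(e_x,e_y,\dots,e_y)$ and evaluates at $(x,y)$, and for \cref{L(e_xy)(xy)+L(e_yz)(yz)=L(e_xz)(xz)} it expands $e_{xz}=p_n(e_{xy},e_{yz},e_z,\dots,e_z)$, evaluates at $(x,z)$, and kills the $L(e_z)$-terms via \cref{L(af)_xx=L(af)_yy}, which is exactly where the $(n-1)$-torsion-freeness enters in the paper as well. Your observation that the first identity needs no torsion hypothesis is also consistent with the paper's argument.
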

\begin{proof}
Since $e_xe_y=e_ye_x=0$, we see that $p_n(e_x,e_y,\dots,e_y)=0$. We therefore have by \cref{Lie-n-der-formula}
\begin{align}
	0&=p_n(L(e_x),e_y,\dots,e_y)+p_n(e_x,L(e_y),e_y\dots,e_y)\notag\\
	&\quad+\sum_{i=1}^{n-2}p_n(e_x,\underbrace{e_y\dots,e_y}_i,L(e_y),e_y\dots,e_y)\notag\\
	&=p_n(L(e_x),e_y,\dots,e_y)+p_n(e_x,L(e_y),e_y\dots,e_y).\label{p_n(L(e_x).e_y...e_y)+p_n(e_x.L(e_y)...e_y)}
\end{align}
By \cref{p_n(af.e_y...)(xy)=af(xy),p_n(e_x.af.e_y...)(xy)=af(xy)} the value of \cref{p_n(L(e_x).e_y...e_y)+p_n(e_x.L(e_y)...e_y)} at $(x,y)$ equals
$L(e_x)(x,y)+L(e_y)(x,y)$, whence \cref{L(e_x)(xy)+L(e_y)(xy)=0}.

To prove \cref{L(e_xy)(xy)+L(e_yz)(yz)=L(e_xz)(xz)}, observe that
\begin{align}\label{e_xz=p_n(e_xy.e_yz.e_z...e_z)}
	e_{xz}=p_{n-1}(e_{xz},e_z\dots,e_z)=p_n(e_{xy},e_{yz},e_z\dots,e_z).
\end{align}
Thus
\begin{align}
	L(e_{xz})&=L(p_n(e_{xy},e_{yz},e_z\dots,e_z))\notag\\
	&=p_n(L(e_{xy}),e_{yz},e_z\dots,e_z)+p_n(e_{xy},L(e_{yz}),e_z\dots,e_z)\notag\\
	&\quad+\sum_{i=0}^{n-3}p_n(e_{xy},e_{yz},\underbrace{e_z\dots,e_z}_i,L(e_z),e_z\dots,e_z).\label{L(e_xz)=p_n(L(e_xy).e_xz.e_z...)+p_n(e_xy.L(e_xz).e_z...)+sum}
\end{align}
It follows from \cref{p_n(af.e_y...)(xy)=af(xy)} that
\begin{align*}
	p_n(L(e_{xy}),e_{yz},e_z\dots,e_z)(x,z)&=p_2(L(e_{xy}),e_{yz})(x,z)\\
	&=(L(e_{xy})e_{yz}-e_{yz}L(e_{xy}))(x,z)\\
	&=L(e_{xy})(x,y).
\end{align*}
Similarly, we get
\begin{align*}
p_n(e_{xy},L(e_{yz}),e_z\dots,e_z)(x,z)&=p_2(e_{xy},L(e_{yz}))(x,z)\\
&=(e_{xy}L(e_{yz})-L(e_{yz})e_{xy})(x,z)\\
&=L(e_{yz})(y,z).
\end{align*}
Now, using \cref{e_xz=p_n(e_xy.e_yz.e_z...e_z),e_xy=p_n(e_xy.e_y...e_y),p_n(af.e_y...)(xy)=af(xy)}, we obtain
\begin{align*}
	p_n(e_{xy},e_{yz},\underbrace{e_z\dots,e_z}_i,L(e_z),e_z\dots,e_z)(x,z)&=p_2(e_{xz},L(e_z))(x,z)\\
	&=(e_{xz}L(e_z)-L(e_z)e_{xz})(x,z)\\
	&=L(e_z)(x,x)-L(e_z)(z,z).
\end{align*}
The latter is zero by \cref{L(af)_xx=L(af)_yy}. Thus, \cref{L(e_xy)(xy)+L(e_yz)(yz)=L(e_xz)(xz)} is proved by taking the values of the both sides of \cref{L(e_xz)=p_n(L(e_xy).e_xz.e_z...)+p_n(e_xy.L(e_xz).e_z...)+sum} at $(x,z)$.
\end{proof}

\begin{defn}\label{tau-and-d-defn}
	Let $L$ be a Lie $n$-derivation of $FI(P,R)$ and $\af\in FI(P,R)$. Define
	\begin{align}
	\tau(\af)&=L(\af)_D,\label{tau(af)=L(af)_D}\\
	d(\af)&=L(\af)-L(\af)_D.\label{d=L-tau}
	\end{align}
\end{defn}

\begin{lem}\label{tau-centr-map}
	Let $L$ be a Lie $n$-derivation of $FI(P,R)$ and $\tau$ be given by \cref{tau(af)=L(af)_D}. If $R$ is $(n-1)$-torsion free, then $\tau$ is a central-valued linear mapping which annihilates all the $n$-th commutators in $FI(P,R)$.
\end{lem}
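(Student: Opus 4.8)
The statement bundles three claims, and my plan is to handle them in increasing order of substance. The linearity of $\tau$ I would dispose of first and immediately: the map $\af\mapsto L(\af)$ is $R$-linear since $L$ is a Lie $n$-derivation, and the diagonal map $\gamma\mapsto\gamma_D$ is $R$-linear by \cref{af_D-defn}, so $\tau$, being their composition, is $R$-linear. The central-valuedness is then nothing more than a restatement of \cref{L(af)_D-belons-to-the-center}: under the standing hypothesis that $R$ is $(n-1)$-torsion free, $\tau(\af)=L(\af)_D$ lies in $Z(FI(P,R))$ for every $\af\in FI(P,R)$.

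The real content is the claim that $\tau$ kills every $n$-th commutator $p_n(x_1,\dots,x_n)$. The fact I would isolate at the outset is that diagonal entries are multiplicative under convolution: reading off \cref{conv-series}, for any $\af,\bt\in FI(P,R)$ and $x\in P$ the only index $z$ with $x\le z\le x$ is $z=x$, so $(\af\bt)(x,x)=\af(x,x)\bt(x,x)$. From this the diagonal of every commutator vanishes, $[\af,\bt](x,x)=\af(x,x)\bt(x,x)-\bt(x,x)\af(x,x)=0$, and since any $(n-1)$-th commutator has the outer shape $p_n(y_1,\dots,y_n)=[p_{n-1}(y_1,\dots,y_{n-1}),y_n]$, I would conclude that $p_n(y_1,\dots,y_n)_D=0$ for every choice of arguments $y_1,\dots,y_n$ and every $n\ge 2$.

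The conclusion would then follow formally. Expanding $L(p_n(x_1,\dots,x_n))$ by \cref{Lie-n-der-formula} and using that the diagonal map is linear,
\begin{align*}
\tau(p_n(x_1,\dots,x_n))&=L(p_n(x_1,\dots,x_n))_D=\sum_{k=1}^n p_n(x_1,\dots,L(x_k),\dots,x_n)_D,
\end{align*}
and each summand, being the diagonal of an $(n-1)$-th commutator, vanishes by the previous step; hence the whole sum is zero.

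I do not expect any genuine obstacle in this lemma. The delicate work—showing that $L(\af)_D$ is in fact central—has already been carried out in \cref{L(af)_xx=L(af)_yy} and \cref{L(af)_D-belons-to-the-center}, and it is there that the $(n-1)$-torsion freeness of $R$ is indispensable. By contrast, the annihilation of $n$-th commutators is a purely formal consequence of the multiplicativity of diagonal entries and requires no hypothesis on $R$ whatsoever; the only point to be careful about is to apply the diagonal operation termwise after invoking \cref{Lie-n-der-formula}, which is justified by its linearity.
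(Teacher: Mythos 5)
Your proposal is correct and follows essentially the same route as the paper: linearity and central-valuedness are quoted from the preceding results, and the annihilation of $n$-th commutators rests on the multiplicativity of the diagonal, $(\af\bt)_D=\af_D\bt_D$, together with the commutativity of diagonal elements (the paper pushes the diagonal inside $p_n$ and notes the result vanishes, which is the same computation as your observation that $[\af,\bt]_D=0$). Your added remark that the $(n-1)$-torsion-freeness is used only for central-valuedness is accurate.
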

\begin{proof}
	We have already seen in \cref{L(af)_D-belons-to-the-center} that $L(\af)_D\in Z(FI(P,R))$, so $\tau$ is center-valued.
	Since $L$ is linear, then $\tau$ is also linear. Moreover, for any $\af_1,\dots,\af_n\in FI(P,R)$, one has by \cref{Lie-n-der-formula,tau(af)=L(af)_D} and the easy fact that $(\af\bt)_D=\af_D\bt_D$:
	\begin{align*}
	\tau(p_n(\af_1,\dots,\af_n))&=L(p_n(\af_1,\dots,\af_n))_D\\
	&=\left(\sum_{i=1}^n p_n(\af_1,\dots,L(\af_i),\dots,\af_n)\right)_D\\
	&=\sum_{i=1}^n p_n((\af_1)_D,\dots,L(\af_i)_D,\dots,(\af_n)_D),
	\end{align*}
	which is zero, as diagonal elements commute.
\end{proof}

\begin{lem}\label{d-derivation}
	If $R$ is $(n-1)$-torsion free, then $d$ is a derivation of $FI(P,R)$.
\end{lem}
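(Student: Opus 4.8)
The plan is to verify the Leibniz rule $d(\af\bt)=d(\af)\bt+\af d(\bt)$ entrywise. Since $L$ is $R$-linear, so is $d$, and by \cref{d=L-tau,tau(af)=L(af)_D} each $d(\gamma)$ has zero diagonal while $d(\gamma)(x,y)=L(\gamma)(x,y)$ for all $x<y$. On the diagonal the rule is immediate: $d(\af\bt)(x,x)=0$, and since $d(\af)(x,x)=d(\bt)(x,x)=0$ the convolution $(d(\af)\bt+\af d(\bt))(x,x)$ also vanishes. Thus everything reduces to proving, for each pair $x<y$, the identity
\begin{align}\label{derivation-key-identity}
L(\af\bt)(x,y)=\sum_{x<z\le y}L(\af)(x,z)\bt(z,y)+\sum_{x\le z<y}\af(x,z)L(\bt)(z,y).
\end{align}
The hypothesis that $R$ is $(n-1)$-torsion free enters only through the previously established \cref{L(af)(xx)=L(af)(yy),L(e_x)(xy)+L(e_y)(xy)=0,L(e_xy)(xy)+L(e_yz)(yz)=L(e_xz)(xz)}, which I shall invoke freely.

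First I would reduce \cref{derivation-key-identity} to the case $\af=e_{pq}$, $\bt=e_{st}$ with $p,q,s,t\in[x,y]$. By \cref{L(af)(xy)=L(af|_x^y)(xy)} the left-hand side equals $L((\af\bt)|_x^y)(x,y)$, and expanding $(\af\bt)|_x^y$ along \cref{alpha|_x^y} and using the $R$-linearity of $\gamma\mapsto L(\gamma)(x,y)$ exhibits it as an $R$-bilinear function of $\af,\bt$ that depends only on the finitely many entries $\af(u,v),\bt(u,v)$ with $x\le u\le v\le y$ (finiteness coming from the finitary condition, which makes each convolution value a finite sum). The right-hand side is manifestly bilinear, and since $L(\af)(x,z)=L(\af|_x^z)(x,z)$ by \cref{L(af)(xy)=L(af|_x^y)(xy)} and the two sums are finite, it depends on the same finitely many entries. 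Both sides being $R$-bilinear in those finitely many coordinates, agreement on the basis elements $e_{pq},e_{st}$ forces agreement in general.

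Next I would evaluate both sides on $\af=e_{pq}$, $\bt=e_{st}$. There the left-hand side is $\delta_{qs}L(e_{pt})(x,y)$, while the right-hand side collapses to the term $L(e_{pq})(x,s)$ (present only when $t=y$ and $x<s$) plus the term $L(e_{st})(q,y)$ (present only when $p=x$ and $q<y$). I would organize the check according to whether $p=x$, whether $t=y$, and the order relation between $q$ and $s$. The decisive configurations are three: the matching-edge case $p=x$, $t=y$, $q=s$ with $x<q<y$, where \cref{L(e_xy)(xy)+L(e_yz)(yz)=L(e_xz)(xz)} gives $L(e_{xq})(x,q)+L(e_{qy})(q,y)=L(e_{xy})(x,y)$; the mismatch cases, where the surviving edge values are rewritten via \cref{L(e_xy)(xv)=L(e_y)(yv),L(e_xy)(uy)=L(e_x)(ux)} as idempotent values $L(e_q)(q,s)$ and $L(e_s)(q,s)$ that cancel by \cref{L(e_x)(xy)+L(e_y)(xy)=0}; and the interior cases (where $e_{pt}$ satisfies $x<p$ and $t<y$, or the pertinent restriction is empty), where both sides vanish by \cref{L(af)(xy)=L(af|_x^y)(xy)} since the relevant restriction is $0$.

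I expect the main obstacle to be not the bilinear reduction, which is clean, but the boundary bookkeeping in the last step. When some of $p,q,s,t$ coincide with $x$ or $y$, or when $e_{pq}$ or $e_{st}$ is a diagonal idempotent, the transfer relations \cref{L(e_xy)(xv)=L(e_y)(yv),L(e_xy)(uy)=L(e_x)(ux)} are not directly applicable and must be supplemented by the diagonal-constancy statement \cref{L(af)(xx)=L(af)(yy)} to produce the required cancellations. Tracking exactly which summands survive in each degenerate configuration, and checking that the idempotent identities align with the correct indices, is where care is needed; once every configuration is matched, \cref{derivation-key-identity} follows, and with it the assertion that $d$ is a derivation.
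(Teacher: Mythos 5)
Your strategy is the paper's, up to a reorganization of the final computation: both arguments verify the Leibniz rule entrywise, dispose of the diagonal using $d(\gamma)_D=0$, and for $x<y$ run everything through \cref{L(af)(xy)=L(af|_x^y)(xy)} and the identities \cref{L(e_xy)(xv)=L(e_y)(yv),L(e_xy)(uy)=L(e_x)(ux),L(e_x)(xy)+L(e_y)(xy)=0,L(e_xy)(xy)+L(e_yz)(yz)=L(e_xz)(xz)}. Where the paper expands $(d(\af)\bt+\af d(\bt))(x,y)$ into six explicit sums and regroups them into $d((\af\bt)|_x^y)(x,y)$, you reduce to pairs of basis elements $e_{pq},e_{st}$ and check cases; your catalogue of cases is correct (in fact the degenerate configurations need only the vanishing of the relevant restrictions together with \cref{L(e_xy)(xv)=L(e_y)(yv),L(e_xy)(uy)=L(e_x)(ux),L(e_x)(xy)+L(e_y)(xy)=0}; \cref{L(af)(xx)=L(af)(yy)} enters only through its role in proving \cref{L(e_xy)(xy)+L(e_yz)(yz)=L(e_xz)(xz)}).

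The gap is in the bilinear reduction itself. You claim that both sides of your key identity depend only on ``the finitely many entries $\af(u,v),\bt(u,v)$ with $x\le u\le v\le y$,'' citing the finitary condition. But $P$ is an arbitrary poset, so the interval $[x,y]$ may be infinite, and the finitary condition controls only the nonzero \emph{off-diagonal} entries in the interval: the diagonal values $\af(u,u)$, $u\in[x,y]$, may be nonzero for infinitely many $u$. Hence $\af$ is in general not a finite $R$-linear combination of the $e_{pq}$ even after discarding everything outside $[x,y]$, and ``bilinear and zero on all basis pairs'' does not by itself give zero everywhere. (This is precisely the regime where $FI(P,R)$ differs from the ordinary incidence algebra; for locally finite $P$ your reduction is unobjectionable.) The step is repairable: for fixed $\bt$, each of the three expressions depends on $\af$ only through $\af(x,x)$, $\af(y,y)$, the finitely many nonzero off-diagonal entries of $\af$ inside $[x,y]$, and the values $\af(u,u)$ at the finitely many $u$ with $\bt(u,y)\ne 0$ (the only way a diagonal entry $\af(u,u)$ with $x<u<y$ enters is through $(\af\bt)(u,y)=\sum_{u\le w\le y}\af(u,w)\bt(w,y)$). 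So one may truncate $\af$ to a finite linear combination without changing either side, apply linearity, and then repeat the truncation in $\bt$ for each fixed $e_{pq}$. The paper avoids the issue by never leaving the explicit, automatically finite, sums.
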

\begin{proof}
	We need to prove that 
	\begin{align}\label{d(af.bt)=d(af).bt+af.d(bt)}
	d(\af\bt)=d(\af)\bt+\af d(\bt)
	\end{align}
	for all $\af,\bt\in FI(P,R)$. Observe that 
	\begin{align}\label{d(af)_D-is-zero}
	d(\af)_D=L(\af)_D-L(\af)_D=0.
	\end{align}
	So \cref{d(af.bt)=d(af).bt+af.d(bt)} trivially holds at $(x,x)$ for all $x\in P$. Now, given $x<y$, 
	by \cref{d=L-tau,tau(af)=L(af)_D,L(af)(xy)=L(af|_x^y)(xy),d(af)_D-is-zero} we have
	\begin{align}
	(d(\af)\bt)(x,y)&=\sum_{x\le z\le y}d(\af)(x,z)\bt(z,y)\notag\\
	&=\sum_{x< z\le y}L(\af)(x,z)\bt(z,y)=\sum_{x< z\le y}L(\af|_x^z)(x,z)\bt(z,y)\notag\\
	&=\sum_{x< z\le y}\Big(\alpha(x,z)L(e_{xz})(x,z)+\sum_{x\le v<z}\alpha(x,v)L(e_{xv})(x,z)\notag\\
	&\quad+\sum_{x<u\le z}\alpha(u,z)L(e_{uz})(x,z)\Big)\bt(z,y).\label{(d(af).bt)(xy)=sum}
	\end{align}
	Similarly,
	\begin{align}
	(\af d(\bt))(x,y)&=\sum_{x\le z\le y}\af(x,z)d(\bt)(z,y)\notag\\
	&=\sum_{x\le z< y}\af(x,z)L(\bt)(z,y)=\sum_{x\le z< y}\af(x,z) L(\bt|_z^y)(z,y)\notag\\
	&=\sum_{x\le z< y}\af(x,z)\Big(\bt(z,y)L(e_{zy})(z,y)+\sum_{z\le v<y}\bt(z,v)L(e_{zv})(z,y)\notag\\
	&\quad+\sum_{z<u\le y}\bt(u,y)L(e_{uy})(z,y)\Big).\label{(af.d(bt))(xy)=sum}
	\end{align}
	Hence, adding \cref{(d(af).bt)(xy)=sum,(af.d(bt))(xy)=sum}, we arrive at
	\begin{align}
	(d(\af)\bt+\af d(\bt))(x,y)&=(\af(x,y)\bt(y,y)+\af(x,x)\bt(x,y))L(e_{xy})(x,y)\label{(af(xy).bt(yy)+af(xx).bt(xy))L(e_xy)(xy)}\\
	&\quad+\sum_{x<z<y}\af(x,z)\bt(z,y)(L(e_{xz})(x,z)+L(e_{zy})(z,y))\label{sum-af(xz)bt(zy)(L(e_xz)(xz)+L(e_zy)(zy))}\\
	&\quad+\sum_{x<u\le z\le y}\af(u,z)\bt(z,y)L(e_{uz})(x,z)\label{sum-af(uz)bt(zy)L(e_uz)(xz)}\\
	&\quad+\sum_{x\le z\le v< y}\af(x,z)\bt(z,v)L(e_{zv})(z,y)\label{sum-af(xz)bt(zv)L(e_zv)(zy)}\\
	&\quad+\sum_{x\le v<z\le y}\alpha(x,v)\bt(z,y)L(e_{xv})(x,z)\label{sum-af(xv)bt(zy)L(e_xv)(xz)}\\
	&\quad+\sum_{x\le z<u\le y}\af(x,z)\bt(u,y)L(e_{uy})(z,y).\label{sum-af(xz)bt(uy)L(e_uy)(zy)}
	\end{align}
	
	The sum of \cref{sum-af(xz)bt(zy)(L(e_xz)(xz)+L(e_zy)(zy)),(af(xy).bt(yy)+af(xx).bt(xy))L(e_xy)(xy)} equals
	\begin{align}
	&\left(\alpha(x,y)\bt(y,y)+\af(x,x)\bt(x,y)+\sum_{x<z<y}\af(x,z)\bt(z,y)\right)L(e_{xy})(x,y)\notag\\
	&=(\af\bt)(x,y)L(e_{xy})(x,y)\label{(af.bt)(xy)L(e_xy)(xy)},
	\end{align}
	which is due to \cref{L(e_xy)(xy)+L(e_yz)(yz)=L(e_xz)(xz)}. 
	
	Now, 
	\begin{align*}
	L(e_{uz})(x,z)=L(e_u)(x,u)=L(e_{uy})(x,y)
	\end{align*}
	for all $x<u<z\le y$ and
	\begin{align*}
	L(e_{zy})(x,y)=L(e_z)(x,z)
	\end{align*}
	for all $x<z<y$ by \cref{L(e_xy)(uy)=L(e_x)(ux)}. So \cref{sum-af(uz)bt(zy)L(e_uz)(xz)} becomes 
	\begin{align}
	&\af(y,y)\bt(y,y)L(e_y)(x,y)+\sum_{x<z<y}\af(z,z)\bt(z,y)L(e_z)(x,z)\notag\\
	&\quad+\sum_{x<u<z\le y}\af(u,z)\bt(z,y)L(e_{uz})(x,z)\notag\\
	&=\af(y,y)\bt(y,y)L(e_y)(x,y)+\sum_{x<z<y}\af(z,z)\bt(z,y)L(e_{zy})(x,y)\notag\\
	&\quad+\sum_{x<u<z\le y}\af(u,z)\bt(z,y)L(e_{uy})(x,y)\notag\\
	&=\sum_{x<u\le z\le y}\af(u,z)\bt(z,y)L(e_{uy})(x,y)=\sum_{x<u\le y}(\af\bt)(u,y)L(e_{uy})(x,y)\label{sum-(af.bt)(uy)L(e_uy)(xy)}
	\end{align}
	Similarly, \cref{sum-af(xz)bt(zv)L(e_zv)(zy)} is equal to
	\begin{align}\label{sum-(af.bt)(xv)L(e_xv)(xy)}
	\sum_{x\le v< y}(\af\bt)(x,v)L(e_{xv})(x,y).
	\end{align}
	
	Adding \cref{sum-af(xv)bt(zy)L(e_xv)(xz),sum-af(xz)bt(uy)L(e_uy)(zy)} and making changes of variables in the both sums, we obtain
	\begin{align}\label{sum-af(xa).bt(by)(L(e_xa)(xb)+L(e_by)(ay))}
	\sum_{x\le a<b\le y}\alpha(x,a)\bt(b,y)(L(e_{xa})(x,b)+L(e_{by})(a,y)).
	\end{align}
	But 
	\begin{align*}
	L(e_{xa})(x,b)+L(e_{by})(a,y)=L(e_a)(a,b)+L(e_b)(a,b)=0
	\end{align*}
	for all $x<a<b<y$ by \cref{L(e_xy)(uy)=L(e_x)(ux),L(e_xy)(xv)=L(e_y)(yv),L(e_x)(xy)+L(e_y)(xy)=0}. Thus,  
	\cref{sum-af(xa).bt(by)(L(e_xa)(xb)+L(e_by)(ay))} is equal to
	\begin{align}
	&\alpha(x,x)\sum_{x<b\le y}\bt(b,y)(L(e_x)(x,b)+L(e_{by})(x,y))\notag\\
	&\quad+\bt(y,y)\sum_{x\le a<y}\alpha(x,a)(L(e_{xa})(x,y)+L(e_y)(a,y))\notag\\
	&=\alpha(x,x)\bt(y,y)(L(e_x)(x,y)+L(e_y)(x,y))\label{af(xx).bt(yy)(L(e_x)(xy)+L(e_y)(xy))}\\
	&\quad+\alpha(x,x)\sum_{x<b<y}\bt(b,y)(L(e_x)(x,b)+L(e_{by})(x,y))\label{af(xx).sum-bt(by)(L(e_x)(xb)+L(e_by)(xy))}\\
	&\quad+\bt(y,y)\alpha(x,x)(L(e_x)(x,y)+L(e_y)(x,y))\label{bt(yy).af(xx)(L(e_x)(xy)+L(e_y)(xy))}\\
	&\quad+\bt(y,y)\sum_{x<a<y}\alpha(x,a)(L(e_{xa})(x,y)+L(e_y)(a,y))\label{bt(yy).sum-af(xa)(L(e_xa)(xy)+L(e_y)(ay))}
	\end{align}
	In view of \cref{L(e_x)(xy)+L(e_y)(xy)=0}, we know that \cref{af(xx).bt(yy)(L(e_x)(xy)+L(e_y)(xy)),bt(yy).af(xx)(L(e_x)(xy)+L(e_y)(xy))} are zero. 
	Since
	\begin{align*}
		L(e_x)(x,b)+L(e_{by})(x,y)&=L(e_x)(x,b)+L(e_b)(x,b)=0,\\
		L(e_{xa})(x,y)+L(e_y)(a,y)&=L(e_a)(a,y)+L(e_y)(a,y)=0
	\end{align*}
	thanks to \cref{L(e_xy)(uy)=L(e_x)(ux),L(e_xy)(xv)=L(e_y)(yv),L(e_x)(xy)+L(e_y)(xy)=0},
	\cref{af(xx).sum-bt(by)(L(e_x)(xb)+L(e_by)(xy)),bt(yy).sum-af(xa)(L(e_xa)(xy)+L(e_y)(ay))} are zero as well. 
	It follows that  \cref{sum-af(xa).bt(by)(L(e_xa)(xb)+L(e_by)(ay))} is zero.
	
	Combining the result of the previous paragraph with \cref{(af.bt)(xy)L(e_xy)(xy),sum-(af.bt)(uy)L(e_uy)(xy),sum-(af.bt)(xv)L(e_xv)(xy)}, we conclude that
	\begin{align*}
		(d(\af)\bt+\af d(\bt))(x,y)&=(\af\bt)(x,y)L(e_{xy})(x,y)+\sum_{x<u\le y}(\af\bt)(u,y)L(e_{uy})(x,y)\\
		&\quad+\sum_{x\le v< y}(\af\bt)(x,v)L(e_{xv})(x,y)\\
		&=d((\af\bt)|_x^y)(x,y),
	\end{align*}
	the latter being $d(\af\bt)(x,y)$ in view of \cref{L(af)(xy)=L(restr-of-af)(xy)} applied to $d$, whence \cref{d(af.bt)=d(af).bt+af.d(bt)}.
\end{proof}

\begin{thrm}\label{L-is-D+F}
	Let $L$ be a Lie $n$-derivation of $FI(P,R)$. If $R$ is $(n-1)$-torsion free, then $L=d+\tau$, where 
	$d$ is a derivation of $FI(P,R)$ and $\tau$ is a central-valued linear mapping which annihilates all the $n$-th commutators.
\end{thrm}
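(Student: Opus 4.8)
The plan is to assemble the decomposition directly from the two maps $d$ and $\tau$ introduced in \cref{tau-and-d-defn}. Recall that $\tau(\af)=L(\af)_D$ and $d(\af)=L(\af)-L(\af)_D$ for every $\af\in FI(P,R)$. The first thing I would observe is that these definitions make the identity $L=d+\tau$ automatic: for any $\af$ we have $d(\af)+\tau(\af)=(L(\af)-L(\af)_D)+L(\af)_D=L(\af)$, so the additive splitting requires no argument at all. The content of the theorem is therefore entirely in the claim that each summand has the advertised structural property.

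It then remains to invoke the two preparatory lemmas in turn. That $d$ is a derivation of $FI(P,R)$ is precisely \cref{d-derivation}, and that $\tau$ is an $R$-linear, central-valued mapping annihilating every $n$-th commutator $p_n(\af_1,\dots,\af_n)$ is exactly \cref{tau-centr-map}; both are available under the standing hypothesis that $R$ is $(n-1)$-torsion free. Combining them with the trivial identity of the previous paragraph yields the statement, so the theorem itself is a one-line consequence of the machinery already in place.

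The subtlety worth flagging is where the torsion-freeness hypothesis is genuinely consumed: not in this final assembly, but upstream in \cref{L(af)_xx=L(af)_yy} and its corollary \cref{L(af)_D-belons-to-the-center}, which guarantee that $L(\af)_D$ lands in the center and that $L(e_z)(x,x)=L(e_z)(z,z)$. The latter cancellation is exactly what makes the long bookkeeping collapse. Consequently the real work — and what I expect to be the main obstacle — is wholly absorbed by \cref{d-derivation}, whose verification of the Leibniz rule $d(\af\bt)=d(\af)\bt+\af d(\bt)$ rests on the matching relations \cref{L(e_x)(xy)+L(e_y)(xy)=0,L(e_xy)(xy)+L(e_yz)(yz)=L(e_xz)(xz)} for the off-diagonal values of $L$ on the generators $e_{xy}$. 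Given all of this preparation, writing out the proof of \cref{L-is-D+F} amounts to nothing more than citing \cref{d-derivation,tau-centr-map}.
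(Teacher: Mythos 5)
Your proposal is correct and matches the paper's proof exactly: the paper's own argument for \cref{L-is-D+F} is simply to cite \cref{tau-and-d-defn,tau-centr-map,d-derivation}, with the identity $L=d+\tau$ being immediate from the definitions, just as you observe. Your remarks on where the $(n-1)$-torsion-freeness is actually consumed are accurate and consistent with the paper's development.
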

\begin{proof}
 See \cref{tau-and-d-defn,tau-centr-map,d-derivation}.
\end{proof}

The following example shows that the assumption of $(n-1)$-torsion free in Theorem \ref{L-is-D+F} is necessary. 

\begin{exm}\label{L=id-is-Lie}
	Let $n\ge 3$, $P=\{1,2\}$ with $1<2$ and $\operatorname{char}R=n-1$. Then there are non-proper Lie $n$-derivations of $FI(P,R)$.
\end{exm}
\begin{proof}
	Define $L$ to be the identity mapping $FI(P,R)\to FI(P,R)$. Then 
	\begin{align*}
		L(p_n(\af_1,\dots,\af_n))=\sum_{i=1}^n p_n(\af_1,\dots,\af_{i-1},L(\af_i),\af_{i+1},\dots,\af_n)
	\end{align*}
	is equivalent to $(n-1)p_n(\af_1,\dots,\af_n)=0$, which is true under the assumption $\operatorname{char}R=n-1$. 
	
	Suppose that $L=d+\tau$ for some derivation $d$ and central-valued linear mapping $\tau$. Then
	\begin{align*}
		e_1=L(e_1)=d(e_1)+\tau(e_1).
	\end{align*}
	Since $\tau(e_1)\in Z(FI(P,R))$, we have $\tau(e_1)(1,1)=\tau(e_1)(2,2)$, whence
	\begin{align*}
		d(e_1)(1,1)=1-\tau(e_1)(1,1)=1-\tau(e_1)(2,2)\ne -\tau(e_1)(2,2)=d(e_1)(2,2).
	\end{align*}
	However,
	\begin{align*}
		d(e_1)=d(e_1^2)=e_1d(e_1)+d(e_1)e_1,
	\end{align*} 
	so $d(e_1)(2,2)=0$ and $d(e_1)(1,1)=2d(e_1)(1,1)$, which again implies that $d(e_1)(1,1)=0$, a contradiction.
\end{proof}

\begin{probl}
We would like to point out that all involved Lie-type derivations in our current work are linear.  It is natural to ask whether 
\cref{L-is-D+F} holds without the assumption of additivity. That is, we can investigate multiplicative 
Lie-type derivations of finitary incidence algebras, which is motivated by \cite{BenkovicEremita2, FosnerWeiXiao, WangWang}.
\end{probl}

\section*{Acknowledgements}
Mykola Khrypchenko was partially supported by the Foreign High-level Cultural and Educational Experts Project  
of the Beijing Institute of Technology.


\end{document}